\documentclass[11pt]{article}
\usepackage[margin=1.05in]{geometry}
\usepackage{amsmath,amssymb,amsthm}
\usepackage[T1]{fontenc}
\usepackage{lmodern}
\usepackage{microtype}
\usepackage[hidelinks]{hyperref}

\newtheorem{theorem}{Theorem}[section]
\newtheorem{proposition}[theorem]{Proposition}
\newtheorem{lemma}[theorem]{Lemma}
\newtheorem{remark}[theorem]{Remark}

\newcommand{\Pp}{P}

\title{Convergence of the critical probabilities for stopping near the top of a random walk}
\author{Jos\'e A. Islas\\
Facultad de Ciencias Naturales y Exactas (FACINE), Universidad Aut\'onoma de Sinaloa,\\
Culiac\'an, Sinaloa, Mexico}
\date{}

\begin{document}
\maketitle

\begin{abstract}
Let $\{S_n\}_{n\ge0}$ be a Bernoulli random walk with $P(X_i=1)=p$ and $P(X_i=-1)=q=1-p$, and let $M_n=\max\{S_0,\ldots,S_n\}$. We consider the semi-best selection problem of maximizing $P(M_N-S_\tau\le1)$ over stopping rules $\tau\le N$. Allaart showed that the optimal decision in the critical state $(n,1)$ is determined by critical probabilities $p_n$, and conjectured that $p_n\to1/2$. In this note, we prove this conjecture. For every fixed $p<1/2$, the stopping value converges to $\nu=1-(p/q)^2$, while the continuation value converges to $2p\nu$. Since $2p\nu<\nu$, stopping is optimal in state $(n,1)$ for all sufficiently large $n$. Together with the known bound $p_n<1/2$ for $n\ge4$, this yields $\lim_{n\to\infty}p_n=1/2$.
\end{abstract}

\noindent\textbf{Keywords:} Bernoulli random walk; optimal stopping; critical probability; semi-best selection problem.\\
\textbf{Mathematics Subject Classification (2020):} 60G40, 60J10.

\section{Introduction}

Consider $\{S_n\}_{n=0,1,\ldots}$ to be a Bernoulli random walk with parameter $p\in(0,1)$. Thus $S_0:=0$ and, for $n\ge1$,
\[
S_n:=X_1+\cdots+X_n,
\]
where $X_1,X_2,\ldots$ are independent, identically distributed random variables with
\[
\Pp(X_1=1)=p,\qquad \Pp(X_1=-1)=q:=1-p.
\]
Let
\[
M_n:=\max\{S_0,S_1,\ldots,S_n\}.
\]

For a given finite time horizon $N$, the problem of maximizing the probability of stopping at the top,
\[
\Pp(S_\tau=M_N),
\]
was examined by Hlynka and Sheahan \cite{HlynkaSheahan1988} for $p=1/2$. Yam, Yung and Zhou \cite{YamYungZhou2009} later considered the general Bernoulli case.

Allaart \cite{Allaart2010} extended the problem to maximize the probability of stopping at the top or one unit from the top, the semi-best selection problem. The objective is to find a stopping rule $\tau$ maximizing
\begin{equation}\label{eq:objective}
\Pp(M_N-S_\tau\le1).
\end{equation}

Following \cite{Allaart2010,Islas2024}, put
\[
Z_n:=M_n-S_n.
\]
If $k=N-n$ steps remain and $Z_n=j$, denote the state by $(k,j)$. Stopping in $(k,j)$ gives probability zero when $j\ge2$, and gives
\[
U_k:=\Pp(M_k\le1)
\]
when $j=0$ or $1$. Thus the decision of interest is the comparison in the critical states $(k,1)$, as in \cite{Allaart2010}. No further structural result from \cite{Allaart2010} is used below beyond Proposition \ref{prop:allaart-threshold} and the bound \eqref{eq:upper}.

Let $V_n$ be the optimal win probability from state $(n,1)$, and let $W_n$ be the optimal win probability from $(n,1)$ if one continues at least one more step. Then
\[
V_n=\max\{U_n,W_n\}.
\]
As in \cite{Islas2024}, let $\pi_{n,i}$ denote the optimal win probability from state $(n,i)$ when a state-specific value is needed. Thus $V_n=\pi_{n,1}$.

For convenience, we collect the main results of Allaart \cite{Allaart2010} in the notation used in \cite{Islas2024}.

\begin{proposition}[Allaart]\label{prop:allaart-threshold}
There exists critical values $p_n$ such that, in state $(n,1)$, it is optimal to stop if and only if $p\le p_n$.
\end{proposition}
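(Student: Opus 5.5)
The plan is to reduce the statement to a single-crossing property in $p$ of the difference between continuing and stopping, and to establish it by induction on $n$. Stopping in $(n,1)$ is optimal exactly when $U_n\ge W_n$. So it suffices to show that the set $\{p\in(0,1): U_n(p)\ge W_n(p)\}$ is an interval of the form $(0,p_n]$. Both $U_n$ and $W_n$ are polynomials in $p$, hence continuous, so $p_n$ can be taken as the supremum of that set. Closedness of the set then follows, and the whole problem is to exclude a return from ``continue'' back to ``stop'' as $p$ increases.

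First I would write the one-step decomposition from $(n,1)$. An up-step leads to $(n-1,0)$ and a down-step to $(n-1,2)$. Hence
\[
W_n=p\,\pi_{n-1,0}+q\,\pi_{n-1,2},\qquad
U_n=p\,\Pp(M_{n-1}\le0)+q\,\Pp(M_{n-1}\le2).
\]
This gives
\[
W_n-U_n=p\,A_{n-1}-q\,B_{n-1},
\]
where
\[
A_{n-1}:=\pi_{n-1,0}-\Pp(M_{n-1}\le0)\ge0,\qquad
B_{n-1}:=\Pp(M_{n-1}\le2)-\pi_{n-1,2}.
\]
I would check $B_{n-1}\ge0$ by a path argument. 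Any strategy from $(n-1,2)$ can win only if, at the stopping time, no future rise of more than $1$ above the current running maximum occurs. This event is contained in the event that the walk started at distance $2$ below its maximum never rises more than $2$ above its start, which gives $\pi_{n-1,2}\le \Pp(M_{n-1}\le 2)$. Consequently $W_n-U_n\ge 0$ if and only if $\frac{p}{q}\ge \frac{B_{n-1}}{A_{n-1}}$ (with the obvious convention when $A_{n-1}=0$). Since $p/q$ is strictly increasing in $p$, it is enough to show that $B_{n-1}/A_{n-1}$ is nonincreasing in $p$. Equivalently, one can show that $q^{-1}(W_n-U_n)$ changes sign at most once, from negative to positive.

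To get this monotonicity I would argue by induction on $n$, carrying along an explicit description of the optimal rule in the states $(k,0)$, $(k,2)$ for $k<n$. The induction hypothesis would give, for $k<n$, the threshold structure in $(k,1)$ (stop iff $p\le p_k$). With it, $\pi_{k,0}$ and $\pi_{k,2}$ can be written as sums over first-passage events. Examples are ``the walk next reaches a new maximum at time $j$'' and ``the walk next returns to distance $1$ from the top at time $j$''. Each such term is a product of ballot-type path counts times powers $p^aq^b$. Writing $A_{n-1}$ and $B_{n-1}$ in these terms, I would compare coefficients. The goal is to show that $B_{n-1}$ is supported on paths with relatively more down-steps than $A_{n-1}$, so that the ratio $B_{n-1}/A_{n-1}$ is a quotient of a ``more $q$-heavy'' to a ``more $p$-heavy'' polynomial, and hence decreasing. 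A cleaner route I would try first is a monotone coupling of walks with parameters $p<p'$, using a common uniform variable for each step. The aim would be to show that if continuing beats stopping at $p$, the same continuation strategy, transported through the coupling, still beats stopping at $p'$.

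The main obstacle is this monotonicity step. The optimal continuation strategy itself depends on $p$, since the thresholds $p_k$ for $k<n$ enter $\pi_{k,0}$ and $\pi_{k,2}$. Also, $U_n$ and $W_n$ both decrease in $p$, so a naive coupling only compares each quantity separately, not their difference. I expect to handle the $p$-dependence by working piecewise on the intervals between consecutive thresholds $p_k$, $k<n$. On each piece the optimal rule is fixed and $W_n-U_n$ is an explicit polynomial. I would then verify the single-crossing property on each piece and check that no crossing back occurs at the breakpoints, using continuity of $W_n$ (a maximum of continuous functions).
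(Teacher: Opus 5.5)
The paper gives no proof of this proposition; it is quoted from Allaart. Your argument therefore has to stand on its own, and as written it has a genuine gap. Everything up to $W_n-U_n=pA_{n-1}-qB_{n-1}$ is a correct reformulation. But the upward single crossing of $W_n-U_n$ in $p$ \emph{is} the proposition. For it you only list three possible routes: coefficient comparison in ballot-type expansions, a monotone coupling, and a piecewise analysis between the breakpoints $p_k$, $k<n$. You do not carry any of them out, and you name no mechanism that forces at most one sign change. The piecewise route just restates the problem on each subinterval, for polynomials whose form depends on the unknown ordering of $p_1,\dots,p_{n-1}$. The coupling route, by your own account, only compares $U_n$ and $W_n$ separately. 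So the key step is missing, not merely sketched.

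Two auxiliary claims are also wrong, and your heuristics lean on them.

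First, $B_{n-1}\ge0$ is false. One has $\pi_{3,2}=p\max\{1-p^2,\,2p-p^2\}+qp^2$ and $\Pp(M_3\le2)=1-p^3$, so $B_3=1-3p^2+p^3<0$ at $p=0.7$. More generally, for $n-1\ge3$, $\pi_{n-1,2}\to1$ and $\Pp(M_{n-1}\le2)\to0$ as $p\to1$. Your path argument fails because a win only needs $M_N\le S_\tau+1$: the walk may climb far above its starting level and stop at a new maximum. The equivalence $W_n\ge U_n\iff p/q\ge B_{n-1}/A_{n-1}$ survives, since it only uses $A_{n-1}>0$ (true for $n\ge2$). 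The picture of $B_{n-1}$ as a ``$q$-heavy'' polynomial does not survive.

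Second, $W_n$ does not decrease in $p$. Indeed $W_2=p(2-p)$, and $W_n\to0$ as $p\downarrow0$. The paper's limit $W_n\to2p(1-\rho^2)$ for fixed $p<1/2$ forces $W_n$ to be non-monotone for large $n$. That non-monotonicity is the actual difficulty: a nondecreasing $W_n$ set against the strictly decreasing $U_n$ would make the claim immediate.

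A structural fact you could build on:
\begin{itemize}
\item Stopping in $(k,0)$ is weakly dominated, because both successors allow stopping with value $U_{k-1}\ge U_k$.
\item Stopping in $(k,j)$ with $j\ge2$ gives $0$.
\end{itemize}
Hence $W_n=\max_D W_n^D$ over the finitely many $p$-independent rules $D\subseteq\{1,\dots,n-1\}$ that specify where to stop in $(k,1)$. It follows that $\{p:W_n>U_n\}=\bigcup_D\{p:W_n^D>U_n\}$. Showing that each of these sets is an up-interval (or at least those for rules that are optimal at some $p$) would settle the proposition without breakpoint bookkeeping. But a single-crossing lemma of that kind is exactly what the proposal does not supply.
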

At $p=p_n$, stopping is optimal, possibly tied with continuation. This tie does not affect the limiting argument below.

\begin{lemma}[Allaart]\label{lem:allaart-basic}
The critical values satisfy
\[
p_1=1,\qquad p_2=p_3=\frac12,
\]
\begin{equation}\label{eq:upper}
p_n<\frac12,\qquad n\ge4,
\end{equation}
and
\[
\limsup_{n\to\infty}p_n=\frac12.
\]
\end{lemma}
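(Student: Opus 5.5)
The plan is to treat the three parts of the lemma separately. The small values $p_1,p_2,p_3$ come from direct computation. The strict bound \eqref{eq:upper} comes from exhibiting, at $p=1/2$, a continuation rule that strictly beats stopping. The $\limsup$ then follows from an asymptotic comparison for fixed $p<1/2$. Throughout I use Proposition \ref{prop:allaart-threshold}: to locate $p_n$ it suffices to decide, for a given $p$, whether $U_n\ge W_n$.

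\emph{Small $n$.} For $n=1$ we have $U_1=P(M_1\le1)=1$, so stopping is always optimal and $p_1=1$. For $n=2$ we get $U_2=1-p^2$. Continuing leads to $(1,0)$ with probability $p$, whose value is $U_1=1$. With probability $q$ it leads to $(1,2)$, from which one wins only by an up-step, so that branch contributes $qp$. Hence $W_2=p+pq=2p-p^2$, and $U_2\ge W_2$ if and only if $p\le 1/2$, giving $p_2=1/2$. For $n=3$ I would write out $U_3$ and the values $\pi_{2,0}=U_2$ and $\pi_{2,2}$ (the latter from a one-step lookahead) in the same way. The resulting polynomial inequality $U_3\ge W_3$ should factor with the root $p=1/2$ and yield $p_3=1/2$.

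\emph{The bound $p_n<1/2$ for $n\ge4$.} By the threshold structure it suffices to show $W_n>U_n$ at $p=1/2$. At $p=1/2$ the reflection principle gives $U_n=P(M_n\le1)=P(S_n\in\{0,1\})+P(S_n\in\{-1,0\})$ up to parity, that is, a sum of two central binomial probabilities. I would lower-bound $W_n$ by the value of an explicit rule: take one step; if it is up (state $(n-1,0)$), stop and collect $U_{n-1}$; if it is down (state $(n-1,2)$), continue until the first time $Z$ returns to $1$, then stop, or stop at the horizon. The value of this rule is again expressible through ballot/reflection counts. The task is to verify that this value exceeds $U_n$ for all $n\ge4$, via an exact binomial identity or a ratio estimate plus a finite check of small $n$. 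This step is the main obstacle: the margin is small and must be strict uniformly in $n$. If the simple rule fails for some $n$, I would refine it by using the optimal values $\pi_{n-1,0}$ and $\pi_{n-1,2}$ recursively instead.

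\emph{The $\limsup$.} Since $p_n<1/2$ for $n\ge4$, it remains to show that for each fixed $p<1/2$ stopping is optimal in $(n,1)$ for infinitely many $n$. For $p<1/2$ the walk has a finite overall maximum, and $U_n\downarrow P(\sup_k S_k\le1)=1-(p/q)^2=:\nu$. On the continuation side, a down-step leads to $Z=2$. From there, winning requires climbing back up, and the overall maximum must be essentially attained afterwards; a geometric-tail estimate should show this branch contributes $o(1)$. An up-step leads to $(n-1,0)$, where the value tends to $\nu$. This gives $\limsup W_n\le p\nu+o(1)<\nu$, so stopping is optimal for all large $n$. Hence $p_n\ge p$ eventually, for every $p<1/2$, and $\limsup p_n=1/2$.
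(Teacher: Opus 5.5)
Your computations of $p_1,p_2$ are fine, and so is the reduction of \eqref{eq:upper} to showing $W_n>U_n$ at $p=1/2$. Note that the paper does not prove this lemma; it imports it from Allaart. The proposal has one decisive gap, in the strict bound \eqref{eq:upper}, and two errors that need correcting.

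\textbf{The strict bound.} The explicit rule you propose never beats stopping at $p=1/2$. From $(n-1,2)$, the first visit to $Z=1$ is the first passage of the future walk to $+1$. Stopping there wins iff the future maximum is at most $2$. So your rule is worth $pU_{n-1}+q\,P(1\le M_{n-1}\le2)$, while the first-step decomposition gives $U_n=p\,P(M_{n-1}\le0)+q\,P(M_{n-1}\le2)$. Subtracting,
\[
\bigl(\text{value of your rule}\bigr)-U_n=p\,P(M_{n-1}=1)-q\,P(M_{n-1}=0).
\]
At $p=1/2$ we have $P(M_m=k)=P(S_m=k)+P(S_m=k+1)$, so the difference equals $\tfrac12\bigl(P(S_{n-1}=2)-P(S_{n-1}=0)\bigr)\le0$. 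This is a tie for even $n$ (e.g.\ $5/8$ against $U_4=5/8$) and a strict loss for odd $n$ (e.g.\ $9/16$ against $U_5=5/8$).

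Your fallback of ``using $\pi_{n-1,0},\pi_{n-1,2}$ recursively'' is just the identity $W_n=p\pi_{n-1,0}+q\pi_{n-1,2}$ and supplies no estimate. The strict gain must come from \emph{not} stopping in $(n-1,0)$. For instance, at $p=1/2$ one has $\pi_{3,0}=7/8>U_3=3/4$ (keep going while at the maximum, stop at the first down-step), which gives $W_4=11/16>5/8=U_4$. To finish along your lines you would need, for every $n\ge4$ at $p=1/2$, the bound $\pi_{n-1,0}-U_{n-1}>P(S_{n-1}=0)-P(S_{n-1}=2)$. This is a comparison with a vanishing margin, and your sketch does not provide it. Also, reflection gives $U_n=P(-2\le S_n\le1)$; for even $n$ your formula counts $S_n=0$ twice instead of counting $S_n=-2$.

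\textbf{The $\limsup$ part.} The claim that the down-step branch contributes $o(1)$ is false. From $(n-1,2)$ the walk reaches $Z=1$ with probability tending to $\rho=p/q$, and stopping there wins with probability close to $\nu$. Indeed $\pi_{n,2}\to\rho\nu$ (Proposition \ref{prop:limits}). Hence $W_n\to p\nu+q\rho\nu=2p\nu$, not $p\nu$. The conclusion $\limsup W_n<\nu$ holds only because $2p\nu<\nu$ exactly when $p<1/2$; that, not a vanishing branch, is why the threshold is $1/2$.

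You also assert $\pi_{n-1,0}\to\nu$ without argument. Only the upper bound is nontrivial. The paper gets it, together with $\limsup\pi_{n,2}\le\rho\nu$, by splitting according to when the rule stops:
\begin{itemize}
\item with at least $L$ steps left, the conditional win probability is at most $U_L$ (times $\rho$ when starting from $Z=2$);
\item within the last $L$ steps, the win probability is at most $\sum_{m=n-L+1}^{n}P(S_m\ge i-1)\to0$.
\end{itemize}
With these repairs your argument is exactly the paper's proof of Theorem \ref{thm:main}, and it gives the full limit.

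\textbf{The case $n=3$.} The value $\pi_{2,0}$ is $1$, not $U_2$: one more step from $(2,0)$ lands in $(1,0)$ or $(1,1)$, both worth $U_1=1$. With $\pi_{2,2}=p$ this gives $W_3=2p-p^2$ against $U_3=1-p^2$, so $p_3=1/2$. Your substitution $\pi_{2,0}=U_2=1-p^2$ would instead give the condition $(1-p)(1-p-p^2)\ge0$ and the wrong threshold $(\sqrt5-1)/2$.
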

The statement $\limsup_{n\to\infty}p_n=1/2$ is also due to Allaart but is not used below; for the upper bound in the main theorem, \eqref{eq:upper} already gives $\limsup_{n\to\infty}p_n\le1/2$.

Allaart also proved the interlacing inequality
\[
p_{2m+1}\ge p_{2m-1}\ge p_{2m},\qquad m\ge4.
\]
He further conjectured that
\[
p_{2m}\le p_{2m+2},\qquad m\ge2,
\]
and, more importantly for the present paper,
\begin{equation}\label{eq:conjecture}
\lim_{n\to\infty}p_n=\frac12.
\end{equation}
In \cite{Islas2024}, further oscillation properties and inequalities for $p_n$ were obtained, but the convergence conjecture remained open. The purpose of this paper is to prove \eqref{eq:conjecture} by a short long-horizon comparison. The proof uses from Allaart only Proposition \ref{prop:allaart-threshold} and the bound \eqref{eq:upper}.

\section{Long-horizon win probabilities}

Fix
\[
0<p<\frac12,\qquad q=1-p,\qquad \rho:=\frac pq<1.
\]
We use the standard gambler's ruin fact that, for $a\ge1$, the probability that the walk ever reaches level $a$ is
\begin{equation}\label{eq:hitting}
\rho^a.
\end{equation}

The events $\{M_n\le1\}$ decrease to the event that the walk never reaches level $2$. Hence, by \eqref{eq:hitting},
\begin{equation}\label{eq:Ulimit}
U_n=\Pp(M_n\le1)\longrightarrow
\nu:=1-\rho^2>0.
\end{equation}

The following proposition gives the limits needed in the proof of the main result.

\begin{proposition}\label{prop:limits}
For fixed $p<1/2$,
\begin{equation}\label{eq:neededlimits}
\pi_{n,0}\longrightarrow\nu,
\qquad
\pi_{n,2}\longrightarrow\rho\nu.
\end{equation}
\end{proposition}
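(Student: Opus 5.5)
Both limits will come from a lower bound given by an explicit strategy and an upper bound that holds for every strategy. Throughout, $p<1/2$ is fixed and $\rho=p/q$.

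\emph{The state $(n,0)$.} Stopping immediately in $(n,0)$ wins with probability $U_n$. Hence $\pi_{n,0}\ge U_n\to\nu$ by \eqref{eq:Ulimit}.

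For the upper bound, write $x=S_m$ for the current position, so the current maximum equals $x$. Take any stopping rule $\tau$ started from $(n,0)$. On the event that the walk eventually rises to level $x+2$, the final maximum satisfies $M\ge x+2$. A win then forces a stop at some time $\tau$ with $M_N-S_\tau\le1$. We split according to whether $\tau$ comes before or after the hitting time $T$ of $x+2$.

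\begin{itemize}
\item If $\tau<T$, then $S_\tau\le x+1$. A win requires $M_N\le x+2$, so the walk must not reach $x+3$.
\item If $\tau\ge T$, the walk has already reached $x+2$ by time $\tau$.
\end{itemize}

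This case split is awkward to carry through directly. A cleaner route is to bound the win probability by a comparison with an infinite horizon. For $p<1/2$ the walk drifts to $-\infty$, and $M_\infty-x$ is geometric: $P(M_\infty-x\ge a)=\rho^a$. With finite $N$, the final overshoot $M_N-S_\tau$ is stochastically dominated through the future maximum after $\tau$.

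The key observation is that, given $\mathcal F_\tau$, a win requires the future maximum gain to be at most $1-Z_\tau$. Its conditional probability is therefore at most $P(M_\infty\le 1-Z_\tau)\le 1-\rho^2=\nu$ when $Z_\tau=0$, and at most $1-\rho$ when $Z_\tau=1$. The bound $1-\rho>\nu$ is not good enough on its own. We must also account for the fact that reaching $Z=1$ at a new high costs probability.

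So the plan is to use the infinite-horizon problem as a dominating problem. Let $\pi_{\infty,j}$ be the optimal value with no horizon constraint, stopping allowed at any finite time, and a win meaning $\sup_k S_k - S_\tau\le1$, where the supremum runs over all times $k\ge0$. Since $\{M_N-S_\tau\le1\}$ always contains the infinite-horizon win event, $\pi_{n,j}\le\pi_{\infty,j}+\varepsilon_n$, where $\varepsilon_n\to0$ is the probability that the maximum increases after time $n$. That probability is $\le P(\text{walk hits } S_n+\text{new level after }n)$, which tends to $0$ in the relevant sense. Because the walk is transient to $-\infty$, a dominated-convergence argument on the full path should give $\limsup\pi_{n,j}\le\pi_{\infty,j}$.

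\emph{Solving the infinite problem.} The infinite problem is a Markov chain in $Z$ that only ever visits $Z=0,1$ with hope of winning. Let $v_j=\pi_{\infty,j}$. From $0$: stopping gives $\nu$; one step leads to $0$ with probability $p$ and to $1$ with probability $q$. From $1$: stopping gives $1-\rho$ only if we are at the maximum, but the payoff from $Z=1$ is $P(\text{future gain}\le 2)=\nu$. Hence stopping at $Z\in\{0,1\}$ pays $\nu$, consistent with $U_n$ for both $j=0,1$.

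Every stopped win thus has conditional probability at most $\nu$ on $\{Z_\tau\le1\}$ and $0$ otherwise. This gives $v_0\le\nu$, and the limsup bound then yields $\pi_{n,0}\to\nu$.

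\emph{The state $(n,2)$.} To win from $Z=2$, the walk must first climb $1$ unit, reaching $Z=1$, before doing anything useful. The strong Markov property gives
\[
\pi_{n,2}\le E\bigl[\pi_{n-\sigma,1};\sigma<n\bigr]\le \rho\,\sup_{k}\pi_{k,1}.
\]
Applying the same conditional-$\nu$ bound to every stopped win gives $\pi_{k,1}\le\nu+\varepsilon_k$, so $\limsup\pi_{n,2}\le\rho\nu$.

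For the lower bound, use the strategy "wait until $Z=1$, then stop." It wins with probability at least $P(\sigma\le n/2)\,\min_{k\ge n/2}U_k\to\rho\nu$.

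\emph{Main obstacle.} The delicate step is to show that for any strategy, the conditional win probability given $\mathcal F_\tau$ is $\le P(M_{N-\tau}\le1)\le\nu+o(1)$ uniformly. Since $U_k$ decreases in $k$, this is bounded by $U_{N-\tau}$. The difficulty is the dependence on the remaining horizon $N-\tau$: stopping late, with small remaining horizon, pays $U_{N-\tau}$, which is close to $1$. That case has to be controlled by showing that reaching time $N-O(1)$ while still at $Z\le1$ has small probability.
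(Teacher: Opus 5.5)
Your route differs from the paper's. The paper splits a stopping rule according to whether at least $L$ steps remain (conditional win probability at most $U_L$) or fewer (probability at most $\sum_{m=n-L+1}^{n}P(S_m\ge i-1)\to0$), then sends $n\to\infty$ and $L\to\infty$. You instead compare with the infinite-horizon problem through the strategy-independent error $\varepsilon_n=P(M_\infty>M_n)\to0$. For $(n,0)$ this works, but the comparison must be stated the right way round. The fact that the finite-horizon win event contains the infinite-horizon one only gives a lower bound. What you need is that the two events coincide on $\{M_\infty=M_N\}$, so every rule $\tau\le N$ satisfies $P(M_N-S_\tau\le1)\le\nu\,P(Z_\tau\le1)+\varepsilon_n$. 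The infinite-horizon payoff of stopping is $\nu$ for both $Z_\tau=0$ and $Z_\tau=1$, because in either case the future gain must be at most $1$. Your intermediate values $1-\rho$ and ``$P(\text{future gain}\le2)=\nu$'' are wrong, even though you end at the right number.

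The genuine gap is the upper bound for $(n,2)$. The bound $\pi_{n,2}\le\rho\sup_k\pi_{k,1}$ is useless because $\pi_{0,1}=\pi_{1,1}=U_1=1$, so it yields only $\pi_{n,2}\le\rho$. The estimate $\pi_{k,1}\le\nu+\varepsilon_k$ does not rescue it, since $\varepsilon_k$ is not small for small $k$. This is exactly the late-arrival problem you flag as the main obstacle. If $Z$ first reaches $1$ near the horizon, stopping there wins with probability close to $1$, so no conditional bound uniform in $\tau$ can hold. The paper handles this by showing that being at $Z\le1$ during the last $L$ steps has vanishing probability. In your framework, the repair is to not condition at $\sigma$ but to apply the comparison globally from $(n,2)$. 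Since $Z_\tau\le1$ forces $\sigma\le\tau$, every rule satisfies $P(\text{win})\le\nu\,P(\sigma<\infty)+\varepsilon_n=\rho\nu+\varepsilon_n$. Alternatively, keep your decomposition and show $E[\varepsilon_{n-\sigma};\sigma\le n]\le\varepsilon_{\lceil n/2\rceil}+P(n/2<\sigma<\infty)\to0$. With this repair, your comparison gives both upper bounds in one line and avoids the paper's double limit. Your lower bounds are fine.
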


\begin{proof}
We first consider state $(n,0)$. Stopping immediately gives
\[
\pi_{n,0}\ge U_n,
\]
so by \eqref{eq:Ulimit},
\[
\liminf_{n\to\infty}\pi_{n,0}\ge\nu.
\]
Write
\[
U_m=\nu+\varepsilon_m,
\qquad \varepsilon_m\downarrow0.
\]
We shall use the following elementary observation. Suppose the process starts from state $(n,i)$, and let $X'_1,X'_2,\ldots$ be the subsequent increments, with
\[
S'_m:=X'_1+\cdots+X'_m,
\qquad
M'_m:=\max\{0,S'_1,\ldots,S'_m\}.
\]
After $m$ further steps the process is in a state $(n-m,Z'_m)$, where
\[
Z'_m=\max\{i,M'_m\}-S'_m\ge i-S'_m.
\]
Consequently,
\begin{equation}\label{eq:lastL}
\Pp(Z'_m\le1)\le \Pp(S'_m\ge i-1)=\Pp(S_m\ge i-1),
\end{equation}
since $S'_m$ has the same distribution as $S_m$.
Since $p<1/2$, the random walk tends to move downward. Thus, for every fixed $i$,
\begin{equation}\label{eq:downward}
\Pp(S_m\ge i-1)\longrightarrow0.
\end{equation}

Fix $L\ge1$, let $n>L$, and consider any stopping rule starting from $(n,0)$. If it stops with at least $L$ steps remaining, its conditional win probability is at most
\[
U_L=\nu+\varepsilon_L,
\]
because $U_m$ is nonincreasing in $m$. If it stops with fewer than $L$ steps remaining and wins, then $Z'_m\le1$ for some $m$ among the last $L$ steps. Hence, by \eqref{eq:lastL},
\[
\pi_{n,0}
\le U_L+
\sum_{m=n-L+1}^{n}\Pp(S_m\ge-1).
\]
Here the sum runs over the $L$ times $m=n-L+1,\ldots,n$ at which fewer than $L$ steps remain; each term tends to zero as $n\to\infty$ by \eqref{eq:downward}, and there are only $L$ terms. Therefore
\[
\limsup_{n\to\infty}\pi_{n,0}\le U_L.
\]
Letting $L\to\infty$ and using \eqref{eq:Ulimit} gives
\[
\pi_{n,0}\longrightarrow\nu.
\]

Now consider state $(n,2)$. A winning stop is possible only after the process reaches a state $(k,1)$. Before this first visit, from any state $(k,j)$ with $j\ge2$ the next state is $(k-1,j-1)$ with probability $p$ and $(k-1,j+1)$ with probability $q$. By the gambler's ruin formula, the probability of ever reaching a state $(k,1)$ starting from $(n,2)$ is
\[
\rho=\frac pq.
\]
For the process started from state $(n,2)$, let
\[
T_1:=\inf\{m\ge0:Z'_m=1\}
\]
be the first number of steps required to reach a state of the form $(k,1)$.

For an upper bound, fix $L\ge1$, let $n>L$, and consider any stopping rule from $(n,2)$. If it stops with at least $L$ steps remaining and wins, the process must first reach a state $(k,1)$, an event of probability at most $\rho$. At the stopping state, at least $L$ steps remain, so the conditional win probability is at most $U_L$. Therefore this contribution is at most $\rho U_L$. If it stops with fewer than $L$ steps remaining and wins, then $Z'_m\le1$ for some $m$ among the last $L$ steps. Hence, by \eqref{eq:lastL},
\[
\pi_{n,2}
\le \rho U_L+
\sum_{m=n-L+1}^{n}\Pp(S_m\ge1).
\]
Using \eqref{eq:downward},
\[
\limsup_{n\to\infty}\pi_{n,2}\le\rho U_L.
\]
Letting $L\to\infty$ gives
\[
\limsup_{n\to\infty}\pi_{n,2}\le\rho\nu.
\]

For the lower bound, fix $K\ge1$ and use the rule that stops at the first visit to a state $(k,1)$ if $T_1\le K$. For $n>K$,
\[
\pi_{n,2}\ge
\sum_{k=1}^{K}\Pp(T_1=k)U_{n-k}.
\]
Letting $n\to\infty$ gives
\[
\liminf_{n\to\infty}\pi_{n,2}
\ge \nu\Pp(T_1\le K).
\]
Finally, letting $K\to\infty$ and using
\[
\Pp(T_1<\infty)=\rho,
\]
we obtain
\[
\liminf_{n\to\infty}\pi_{n,2}\ge\rho\nu.
\]
Hence
\[
\pi_{n,2}\longrightarrow\rho\nu.
\]
\end{proof}

\section{The critical probabilities}

We can now prove the conjecture stated by Allaart and considered further in \cite{Islas2024}.

\begin{theorem}\label{thm:main}
The critical probabilities satisfy
\[
\lim_{n\to\infty}p_n=\frac12.
\]
\end{theorem}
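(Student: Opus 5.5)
Since \eqref{eq:upper} gives $p_n<1/2$ for $n\ge4$, we already have $\limsup_{n\to\infty}p_n\le1/2$. It therefore suffices to prove $\liminf_{n\to\infty}p_n\ge1/2$. By Proposition \ref{prop:allaart-threshold}, this reduces to the following claim: for every fixed $p<1/2$, stopping is optimal in state $(n,1)$ for all sufficiently large $n$, which gives $p\le p_n$ eventually. Indeed, given $\varepsilon>0$, apply the claim with $p=1/2-\varepsilon$. Then $p_n\ge 1/2-\varepsilon$ for all large $n$, and we let $\varepsilon\to0$.

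To prove the claim, fix $p<1/2$ and compare $U_n$ with $W_n$. By \eqref{eq:Ulimit}, $U_n\to\nu>0$. For the continuation value we use a one-step decomposition. From $(n,1)$, an up-step leads to $(n-1,0)$, since the walk returns to its maximum. A down-step leads to $(n-1,2)$. Hence
\[
W_n=p\,\pi_{n-1,0}+q\,\pi_{n-1,2}.
\]
By Proposition \ref{prop:limits}, this tends to
\[
p\nu+q\rho\nu=p\nu+p\nu=2p\nu .
\]

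Since $p<1/2$ we have $2p\nu<\nu$, because $\nu>0$. So $W_n<U_n$ for all sufficiently large $n$. Then $V_n=U_n>W_n$, stopping is (strictly) optimal in $(n,1)$, and Proposition \ref{prop:allaart-threshold} gives $p\le p_n$. This completes the argument.

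The only point needing care is checking the transition structure of $Z$ from $(n,1)$. An up-step raises $S$ to $M$ (or to a new maximum), giving $Z=0$; a down-step gives $Z=2$. We must also check that $W_n$ is exactly this average of the optimal values $\pi_{n-1,\cdot}$. Both are immediate from the Markov property of $(k,Z)$, which is the formulation already used in Proposition \ref{prop:limits}. The real analytic work, namely the limits of $\pi_{n,0}$ and $\pi_{n,2}$, is already done in that proposition, so no genuine obstacle remains.
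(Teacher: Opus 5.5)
Your proposal is correct and follows the paper's proof essentially step for step. Both use the upper bound from \eqref{eq:upper}, the one-step decomposition $W_n=p\,\pi_{n-1,0}+q\,\pi_{n-1,2}$, and Proposition~\ref{prop:limits} to get $W_n\to2p\nu<\nu=\lim U_n$, and then conclude $p\le p_n$ eventually via Proposition~\ref{prop:allaart-threshold}. One harmless nit: from $(n,1)$ an up-step lands exactly on the current maximum and never on a new one, so your parenthetical ``or to a new maximum'' is unnecessary.
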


\begin{proof}
Fix any $p<1/2$. From the critical state $(n,1)$, if one continues for one step, the next state is $(n-1,0)$ with probability $p$ and $(n-1,2)$ with probability $q$. Hence
\begin{equation}\label{eq:W}
W_n=p\pi_{n-1,0}+q\pi_{n-1,2}.
\end{equation}
By Proposition \ref{prop:limits},
\[
W_n\longrightarrow p\nu+q\rho\nu
=p\nu+q\frac pq\nu
=2p\nu.
\]
By \eqref{eq:Ulimit}, $U_n\to\nu$. Therefore
\begin{equation}\label{eq:difference}
U_n-W_n\longrightarrow \nu(1-2p)>0.
\end{equation}
Thus $U_n>W_n$ for all sufficiently large $n$. Hence, for every fixed $p<1/2$, it is eventually optimal to stop in state $(n,1)$. By the definition of $p_n$,
\[
p\le p_n
\]
for all sufficiently large $n$. Since this holds for every $p<1/2$,
\[
\liminf_{n\to\infty}p_n\ge\frac12.
\]
Together with Allaart's bound \eqref{eq:upper},
\[
\limsup_{n\to\infty}p_n\le\frac12,
\]
and the result follows.
\end{proof}

\begin{remark}
The proof uses from Allaart only Proposition \ref{prop:allaart-threshold} and the bound $p_n<1/2$ for $n\ge4$. It does not require monotonicity of $p_n$ or separate arguments for the even and odd subsequences. For each fixed $p<1/2$,
\[
U_n\to\nu,
\qquad
W_n\to2p\nu,
\]
so $V_n=U_n$ for all sufficiently large $n$, and consequently $V_n\to\nu$.
\end{remark}

\begin{remark}
The index from which stopping is optimal depends on the fixed value of $p$ and is not uniform as $p\uparrow1/2$. The argument proves convergence of $p_n$ but does not provide a rate.
\end{remark}

\section*{Declarations}

\noindent\textbf{Funding.} The author received no specific funding for this work.

\noindent\textbf{Competing interests.} The author declares no competing interests.

\noindent\textbf{Use of generative AI.} During the preparation of this manuscript, the author used OpenAI's ChatGPT to assist with proof checking, literature checking, and language editing. The author reviewed and verified the mathematical arguments and references and takes full responsibility for the content of the manuscript.

\end{document}